\newcommand{\bburl}[1]{\textcolor{blue}{\url{#1}}}
\newtheorem{theorem}{Theorem}[section]
\newcommand\be{\begin{equation}}
\newcommand\ee{\end{equation}}
\newcommand\bea{\begin{eqnarray}}
\newcommand\eea{\end{eqnarray}}
\newcommand\bi{\begin{itemize}}
\newcommand\ei{\end{itemize}}
\newcommand\ben{\begin{enumerate}}
\newcommand\een{\end{enumerate}}
\newcommand\bc{\begin{center}}
\newcommand\ec{\end{center}}
\newcommand\ba{\begin{array}}
\newcommand\ea{\end{array}}
\newtheorem*{cond1}{Condition 1}
\newtheorem*{cond2}{Condition 2}
\newtheorem{thm}{Theorem}[section]
\newtheorem{conj}[thm]{Conjecture}
\newtheorem{lem}[thm]{Lemma}
\newtheorem{defi}[thm]{Definition}
\theoremstyle{definition}
\newtheorem{rek}[thm]{Remark}
\numberwithin{equation}{section}
\begin{document}

\title{The Generalized Zeckendorf Game}

\author{Paul Baird-Smith}
\email{\textcolor{blue}{\href{mailto:paul.bairdsmith@gmail.com}{paul.bairdsmith@gmail.com}}}
\address{Department of Computer Science, University of Texas at Austin, Austin, TX}

\author{Alyssa Epstein}
\email{\textcolor{blue}{\href{mailto:alye@stanford.edu}{alye@stanford.edu}}}
\address{Department of Law, Stanford University, Stanford, CA}

\author{Kristen Flint}
\email{\textcolor{blue}{\href{mailto:kflint1101@gmail.com}{kflint1101@gmail.com}}}
\address{Department of Mathematics, Carnegie Mellon University, Pittsburgh, PA 15213}

\author{Steven J. Miller}
\email{\textcolor{blue}{\href{mailto:sjm1@williams.edu}{sjm1@williams.edu}},  \textcolor{blue}{\href{Steven.Miller.MC.96@aya.yale.edu}{Steven.Miller.MC.96@aya.yale.edu}}}
\address{Department of Mathematics and Statistics, Williams College, Williamstown, MA 01267}

\subjclass[2000]{11P99 (primary), 11K99 (secondary).}


\date{\today}

\thanks{The authors were partially supported by NSF grants DMS1265673 and DMS1561945, the Claire Booth Luce Foundation, and Carnegie Mellon University. We thank the students from the Math 21-499 Spring '16 research class at Carnegie Mellon and the participants from CANT 2016 and the 18\textsuperscript{th} Fibonacci Conference, especially Russell Hendel, for many helpful conversations.}

\begin{abstract} Zeckendorf proved that every positive integer $n$ can be written uniquely as the sum of non-adjacent Fibonacci numbers; a similar result, though with a different notion of a legal decomposition, holds for many other sequences. We use these decompositions to construct a two-player game, which can be completely analyzed for linear recurrence relations of the form $G_n = \sum_{i=1}^{k} c G_{n-i}$ for a fixed positive integer $c$ ($c=k-1=1$ gives the Fibonaccis). Given a fixed integer $n$ and an initial decomposition of $n = n G_1$, the two players alternate by using moves related to the recurrence relation, and whomever moves last wins. The game always terminates in the Zeckendorf decomposition, though depending on the choice of moves the length of the game and the winner can vary. We find upper and lower bounds on the number of moves possible; for the Fibonacci game the upper bound is on the order of $n\log n$, and for other games we obtain a bound growing linearly with $n$. For the Fibonacci game, Player 2 has the winning strategy for all $n > 2$. If Player 2 makes a mistake on his first move, however, Player 1 has the winning strategy instead. Interestingly, the proof of both of these claims is non-constructive.
\end{abstract}

\maketitle

\tableofcontents


\section{Introduction}\label{section}


\subsection{History}

Familiar from many varied contexts, from mathematical biology to Pascal's triangle, the Fibonacci numbers are an incredibly fascinating and famous sequence. Looking at this sequence, Zeckendorf \cite{Zeckendorf} generated a beautiful theorem: each positive integer $n$ can be written uniquely as the sum of distinct, non-adjacent Fibonacci numbers. This is called the \textit{Zeckendorf decomposition} of $n$ and requires the Fibonacci numbers to be defined as $F_1 = 1, F_2 = 2, F_3 = 3, F_4 = 5, \dots$ instead of the usual $1, 1, 2, 3, 5, \dots$ for uniqueness. The Zeckendorf theorem has been generalized many times (see for example \cite{Br,Ca,CFHMN2,CFHMNPX,Day,Dem,Dor,FGNPT,Fr,GTNP,Ha,Ho,Ke,LT,Len,Ste1,Ste2}); we follow the terminology used by Miller and Wang \cite{MW1}.

\begin{defi}\label{defn:goodrecurrencereldef}\label{def:goodrecurrence} We say a sequence $\{H_n\}_{n=1}^\infty$ of positive integers is a \textbf{Positive Linear Recurrence Sequence (PLRS)} if the following properties hold.

\ben
\item \emph{Recurrence relation:} There are non-negative integers $L, c_1, \dots, c_L$\label{c_i} such that $$H_{n+1} \ = \ c_1 H_n + \cdots + c_L H_{n+1-L},$$ with $L, c_1$ and $c_L$ positive.
\item \emph{Initial conditions:} $H_1 = 1$, and for $1 \le n < L$ we have
$$H_{n+1} \ =\
c_1 H_n + c_2 H_{n-1} + \cdots + c_n H_{1}+1.$$
\een

We call a decomposition $\sum_{i=1}^{m} {a_i H_{m+1-i}}$\label{a_i} of a positive integer $N$ (and the sequence $\{a_i\}_{i=1}^{m}$) \textbf{legal}\label{legal} if $a_1>0$, the other $a_i \ge 0$, and one of the following two conditions holds.

\begin{cond1}\label{legalcond1}
We have $m<L$ and $a_i=c_i$ for $1\le i\le m$.
\end{cond1}

\begin{cond2}
There exists $s\in\{1,\dots, L\}$ such that
\begin{equation}\label{eq:legalcondition2}
a_1\ = \ c_1,\ a_2\ = \ c_2,\ \cdots,\ a_{s-1}\ = \ c_{s-1}\ {\rm{and}}\ a_s<c_s,
\end{equation}
$a_{s+1}, \dots, a_{s+\ell} \ = \  0$ for some $\ell \ge 0$,
and $\{b_i\}_{i=1}^{m-s-\ell}$ (with $b_i = a_{s+\ell+i}$) is legal.
\end{cond2}

If\space $\sum_{i=1}^{m} {a_i H_{m+1-i}}$ is a legal decomposition of $N$, we define the \textbf{number of summands}\label{summands} (of this decomposition of $N$) to be $a_1 + \cdots + a_m$.
\end{defi}

Informally, a legal decomposition is one where we cannot use the recurrence relation to replace a linear combination of summands with another summand, and the coefficient of each summand is appropriately bounded; other authors \cite{DG,Ste1} use the phrase $G$-ary decomposition for a legal decomposition, and sum-of-digits or summatory function for the number of summands. For example, if $H_{n+1} = 2 H_n + 3 H_{n-1} + H_{n-2}$, then $H_5 + 2 H_4 + 3 H_3 + H_1$ is legal, while $H_5 + 2 H_4 + 3 H_3 + H_2$ is not (we can replace $2 H_4 + 3 H_3 + H_2$ with $H_5$), nor is $7H_5 + 2H_2$ (as the coefficient of $H_5$ is too large). Note the Fibonacci numbers are just the special case of $L=2$ and $c_1 = c_2 = 1$.

\begin{theorem}[Generalized Zeckendorf's Theorem for PLRS]\label{thm:genZeckendorf} Let $\{H_n\}_{n=1}^\infty$ be a \emph{Positive Linear Recurrence Sequence}. Then there is a unique legal decomposition for each positive integer $N\ge 0$.
\end{theorem}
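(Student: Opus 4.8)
The plan is to prove existence and uniqueness separately, both by (strong) induction on the integer $N$, exploiting the greedy algorithm for existence and a careful lexicographic comparison for uniqueness. Throughout I would use the sequence $\{H_n\}$ as a ruler: since $H_1=1$ and the sequence is strictly increasing (which follows from $c_1\ge 1$ and the initial conditions), for any $N\ge 1$ there is a well-defined largest index $m$ with $H_m\le N$.

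\textbf{Existence.} I would run the greedy algorithm. Given $N$, pick the largest $m$ with $H_m\le N$, then take the largest coefficient $a_1$ with $a_1 H_m \le N$; the recurrence forces $a_1\le c_1$, since $(c_1+1)H_m \ge c_1 H_m + c_2 H_{m-1}+\cdots+c_L H_{m+1-L} = H_{m+1} > N$ (with the initial-condition version of this inequality used when $m<L$, where the extra $+1$ plays the analogous role). Now subtract and recurse on $N' = N - a_1 H_m$, which is strictly smaller, so by strong induction $N'$ has a legal decomposition $\sum b_i H_{m'+1-i}$ with leading index $m' < m$. The crux is to verify that prepending $a_1 H_m$ (with the intervening coefficients set to $0$) yields a \emph{legal} decomposition in the precise sense of Definition~\ref{def:goodrecurrence}: I must check that either Condition 1 or Condition 2 is met. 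If $a_1<c_1$ this is immediate via Condition 2 with $s=1$; if $a_1=c_1$ I must descend into the next coefficients and show the block structure of Condition 2 is respected, which is exactly where the greedy choice of $m$ (maximality) guarantees the remainder $N'$ is small enough that the recursive legal decomposition slots in with the correct run of leading coefficients. This bookkeeping, rather than any deep idea, is the main obstacle.

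\textbf{Uniqueness.} Suppose $N$ has two legal decompositions. I would compare them by their leading terms. First I would establish the key estimate that a legal decomposition with largest index $m$ represents an integer in the range $[H_m,\ H_{m+1})$; the lower bound is clear from $a_1\ge 1$, and the upper bound is the heart of the matter. The upper bound follows by inducting on the block structure in Condition 2: the maximal value of a legal decomposition with leading index $m$ is $H_{m+1}-1$, attained when every coefficient is taken as large as legality permits, and this telescopes precisely because of the recurrence $H_{m+1}=c_1H_m+\cdots+c_LH_{m+1-L}$ together with the initial conditions (the ``$+1$'' in the initial conditions is what makes the small cases close up). Granting this range statement, two legal decompositions of the same $N$ must share the same leading index $m$. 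Then I would show they share the same leading coefficient $a_1$: if the two leading coefficients differed, the range estimate applied to what remains after removing the leading block would force a contradiction, since the ``tail'' of a legal decomposition cannot span a full factor of $H_m$. Subtracting the common leading term reduces $N$ and lets strong induction finish.

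The single hardest step is the sharp upper bound ``a legal decomposition with top index $m$ is at most $H_{m+1}-1$,'' because legality is defined recursively through the nested blocks of Condition 2, so the bound must be proved by an induction whose inductive hypothesis matches that block decomposition exactly. Once that estimate is in hand, both existence (greedy terminates legally) and uniqueness (leading index and coefficient are forced) follow by routine strong induction on $N$, with the $N=0$ empty decomposition as the base case.
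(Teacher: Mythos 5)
The paper itself does not prove Theorem \ref{thm:genZeckendorf}: the theorem is quoted as known, with the proof explicitly deferred to the literature (``for proofs of Theorem \ref{thm:genZeckendorf}, see \cite{GT}''), so there is no internal argument to compare yours against. Judged on its own merits, your outline follows the standard route used in those sources: a greedy choice of the top index for existence, and, for uniqueness, the interval estimate that a legal decomposition with top index $m$ represents an integer in $[H_m, H_{m+1})$, whose upper bound $H_{m+1}-1$ is proved by induction over the nested blocks of Condition 2, telescoping via the recurrence and the ``$+1$'' in the initial conditions. You correctly identify that estimate as the crux, and your argument that two decompositions of $N$ must share the top index, and then the top coefficient (because a tail whose leading coefficient satisfies $a_1 < c_1$ is worth strictly less than $H_m$), is sound. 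Two bookkeeping points would need to be made explicit in a full write-up, and both stem from the same fact: the tail of a legal decomposition is not itself legal, so neither recursion can proceed summand by summand. In the uniqueness step you must match coefficients across the entire leading block (through $a_s$ and the run of zeros) before recursing on the legal sub-decomposition that Condition 2 guarantees, arguing coefficient by coefficient with the same interval estimate; in the existence step the greedy coefficients must likewise be compared against the pattern $c_1, c_2, \dots$ block by block. You anticipate exactly this (``the block structure of Condition 2 is respected''), so the plan is correct, but as written it is a sketch whose hardest verifications are named rather than carried out --- which is an honest and accurate map of where the work lies, and matches how the proof is actually organized in the reference the paper cites.
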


For more on generalized Zeckendorf decompositions, see the references mentioned earlier, and for proofs of Theorem \ref{thm:genZeckendorf}, see \cite{GT}. This paper aims to introduce a game on these generalized decompositions and prove a variety of properties of such games.


\subsection{New Work}

We first introduce some notation. When we write $\{1^n\}$ or $\{{F_1}^n\}$, we mean n copies of $1$, the first Fibonacci number. If we have 3 copies of $F_1$, 5 copies of $F_3$, and 9 copies of $F_5$, we write either $\{{F_1}^3 \wedge {F_3}^5 \wedge {F_5}^9 \}$ or $\{1^3 \wedge 3^5 \wedge 8^9\}$. We use similar notation for games arising from other recurrences. We start with the Fibonacci case, and then generalize.


\begin{defi}[The Two Player Zeckendorf Game]\label{defi:zg}
At the beginning of the game, there is an unordered list of $n$ 1's. Let $F_1 = 1, F_2 = 2$, and $F_{i+1} = F_i + F_{i-1}$; therefore the initial list is $\{{F_1}^n\}$. On each turn, a player can do one of the following moves.
\begin{enumerate}
\item If the list contains two consecutive Fibonacci numbers, $F_{i-1}, F_i$, then a player can change these to $F_{i+1}$. We denote this move $\{F_{i-1} \wedge F_i \rightarrow F_{i+1}\}$.
\item If the list has two of the same Fibonacci number, $F_i, F_i$, then
\begin{enumerate}
\item if $i=1$, a player can change $F_1, F_1$ to $F_2$, denoted by $\{F_1 \wedge F_1 \rightarrow F_2\}$,
\item if $i=2$, a player can change $F_2, F_2$ to $F_1, F_3$, denoted by $\{F_2 \wedge F_2 \rightarrow F_1 \wedge F_3\}$, and
\item if $i \geq 3$, a player can change $F_i, F_i$ to $F_{i-2}, F_{i+1}$, denoted by $\{F_i \wedge F_i \rightarrow F_{i-2}\wedge F_{i+1} \}$.
\end{enumerate}
\end{enumerate}
The players alternative moving. The game ends when no moves remain.
\end{defi}

The moves of the game are derived from the recurrence, either combining terms to make the next in the sequence or splitting terms with multiple copies. A proof that this game is well defined, ends at the Zeckendorf decomposition, has a sharp lower bound on the number of moves of $n-Z(n)$, and has an upper bound on the order of $n \log n$ can be found in \cite{BEFMcant}. The same paper also proves the following theorem, the proof of which we reproduce.

\begin{thm}\label{thm:playertwowins}
For all $n>2$, Player 2 has the winning strategy for the Zeckendorf Game.\footnote{If $n=2$, there is only one move, and then the game is over.}
\end{thm}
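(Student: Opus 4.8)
The plan is to view the Zeckendorf game as a finite impartial game under the normal-play convention (last player to move wins): from any list both players have exactly the same legal moves, and every play terminates at the Zeckendorf decomposition, so exactly one player has a winning strategy and there are no draws. Call a list a \emph{$\mathcal{P}$-position} if the player to move has no winning strategy, and an \emph{$\mathcal{N}$-position} otherwise; I will use the standard facts that from a $\mathcal{P}$-position every move leads to an $\mathcal{N}$-position, that from an $\mathcal{N}$-position some move leads to a $\mathcal{P}$-position, and that two positions joined by a single move cannot both be $\mathcal{P}$-positions. Since Player 2 moves last exactly when the total number of moves is even, Player 2 wins precisely when the opening list $\{1^n\}$ is a $\mathcal{P}$-position.

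First I would reduce to the position after the opening move. From $\{1^n\}$ with $n>2$ the only legal move is $\{F_1\wedge F_1\to F_2\}$, producing $Q:=\{1^{n-2}\wedge 2\}$; hence $\{1^n\}$ is a $\mathcal{P}$-position if and only if $Q$ is an $\mathcal{N}$-position, i.e. if and only if the player to move at $Q$ (namely Player 2) has a winning move. So the entire theorem reduces to exhibiting a winning move for Player 2 at $Q$. For $n\ge 4$ the list $Q$ admits exactly two moves: combining two $1$'s to reach $A:=\{1^{n-4}\wedge 2^2\}$, or combining $F_1$ with $F_2$ to reach $B:=\{1^{n-3}\wedge 3\}$. (The cases $n=3,4$, where only $B$ is available and $B$ is terminal, I would verify directly; there Player 2 wins immediately.) The key structural observation is that $A\to B$ is itself legal, via the split $\{F_2\wedge F_2\to F_1\wedge F_3\}$, so $A$ and $B$ cannot both be $\mathcal{P}$-positions.

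Because $A$ and $B$ are the \emph{only} two options at $Q$, showing that $Q$ is an $\mathcal{N}$-position is exactly the same as showing that \emph{at least one} of $A,B$ is a $\mathcal{P}$-position. I would therefore argue non-constructively: I assume neither reply works and derive a contradiction, concluding that one of the two opening replies must be winning for Player 2 even though the argument does not reveal which. This is the source of the advertised non-constructive flavor, and dually it identifies the other reply as the losing ``mistake.''

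The hard part is ruling out the possibility that $A$ and $B$ are \emph{both} $\mathcal{N}$-positions. Arguing by contradiction at the top level, suppose Player 1 has a winning strategy; then $Q$ is a $\mathcal{P}$-position and both children $A,B$ are forced to be $\mathcal{N}$-positions. This is not immediately contradictory, since an $\mathcal{N}$-position is permitted to have moves into other $\mathcal{N}$-positions, so the single link $A\to B$ does not close the argument and a purely local comparison of the two lists fails. To finish I would try to convert Player 1's hypothetical strategy into a strategy for Player 2 — a strategy-stealing contradiction with the uniqueness of the winning strategy: after the forced opening Player 2 plays $Q\to A$ and then exploits the extra edge $A\to B$ together with the overlap of the two lists' continuations (for instance both $A$ and $B$ move to $\{1^{n-5}\wedge 2\wedge 3\}$) to inherit the replies that Player 1's strategy would have used. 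Making this precise — showing the phantom move $A\to B$ can always be ``cashed in'' so that Player 2 absorbs Player 1's winning responses — is where I expect essentially all the difficulty to lie, and it is the step that must carry the non-constructive punchline.
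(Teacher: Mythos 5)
Your setup is sound and matches the paper's framing in substance: the game is finite with no ties, so exactly one player has a winning strategy, and after the forced opening move the theorem reduces to showing that $Q=\{1^{n-2}\wedge 2\}$ is a win for the player to move (Player 2). Your observation that $A=\{1^{n-4}\wedge 2^2\}$ moves to $B=\{1^{n-3}\wedge 3\}$ via $\{F_2\wedge F_2\to F_1\wedge F_3\}$, so that $A$ and $B$ cannot both be $\mathcal{P}$-positions, is correct --- but it points the wrong way. What the theorem needs is that $A$ and $B$ cannot both be $\mathcal{N}$-positions, and you explicitly concede that you do not know how to establish this (``making this precise \dots is where I expect essentially all the difficulty to lie''). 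That concession is the entire theorem: everything before it is routine bookkeeping, so the proposal is a plan with the decisive step missing, not a proof. Moreover, the specific lever you hope to pull cannot work at the level where you pull it: the common child $\{1^{n-5}\wedge 2\wedge 3\}$ of $A$ and $B$ sits at the same depth below both, hence with the same player to move, so no parity asymmetry --- and thus no contradiction --- can be extracted from that overlap alone. (A small side error: for $n=4$ it is not true that only $B$ is available from $Q$; the move to $A=\{2^2\}$ exists as well, though that case is trivial to check by hand.)

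The paper closes exactly this gap by descending deeper into the tree and exploiting a state that recurs at two \emph{consecutive} depths, i.e.\ with opposite parity of the player to move. Assuming Player 1 wins, the forced opening and the fact that Player 2 would steer to any losing-for-Player-1 node force Player 1 to have a winning strategy from all three children of $\{1^{n-5}\wedge 2\wedge 3\}$ in row 5, in particular from $\{1^{n-5}\wedge 5\}$ and from $\{1^{n-6}\wedge 3^2\}$. The state $\{1^{n-5}\wedge 5\}$ reappears in row 6, as a child of $\{1^{n-6}\wedge 3^2\}$ obtained by splitting the two 3's; since winning is a property of the multiset together with whose turn it is, Player 1 winning it at row 5 means Player 2 wins it at row 6. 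The paper then propagates this: the unique move out of $\{1^{n-5}\wedge 5\}$ makes $\{1^{n-7}\wedge 2\wedge 5\}$ in row 7 a Player 2 win; hence every row-6 parent of that state --- in particular $\{1^{n-8}\wedge 2\wedge 3^2\}$ --- is a Player 2 win (Player 2 moves in row 6 and can go there); and now \emph{both} children of $\{1^{n-6}\wedge 3^2\}$ are Player 2 wins, contradicting that Player 1 wins that node. This proves the result for $n\ge 9$, with $2<n<9$ checked by brute-force computation. If you want to salvage your approach, this parity flip of a repeated state at consecutive depths is precisely the mechanism your ``cashing in the phantom move $A\to B$'' is missing; your instinct about the non-constructive contradiction is right, but it must be executed several rows deeper in the tree, where such a repeated state actually exists.
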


Interestingly, our proof is non-constructive; we show that Player 2 has a winning strategy but we cannot find it.\footnote{In principle one could enumerate all games for a given starting $n$, but at present we can only analyze a fixed $n$ by brute force.} We can however expand on this result, giving a lemma proved in a similar manner; again, the strategy is non-constructive.

\begin{lem}\label{lem:p1retaliate}
For all $n>3$, Player 1 has the winning strategy if Player 2 makes the wrong move on his first turn. \footnote{If $n=2$, Player 2 never moves. If $n=3$, Player 2 cannot make a mistake as there is only one available move. We thank Russell Hendel for asking the question on how early in the game we can have Player 2 make a bad move.}
\end{lem}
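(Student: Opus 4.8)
The plan is to run the same Zermelo-style, non-constructive argument used for Theorem~\ref{thm:playertwowins}, but pushed one move deeper into the game tree. First I would record the structural facts that let the framework apply: the Zeckendorf game started from $\{1^n\}$ always terminates (it ends at the Zeckendorf decomposition) and admits no draws, since whoever moves last wins. Hence by Zermelo's theorem every reachable position is either a $P$-position (the player to move loses under optimal play) or an $N$-position (the player to move wins), and from a $P$-position \emph{every} legal move leads to an $N$-position.

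Next I would pin down the opening. From $\{1^n\}$ the only legal move is $\{F_1 \wedge F_1 \to F_2\}$, so Player 1's first move is forced and the game reaches $\{1^{n-2} \wedge 2\}$ with Player 2 to move. For $n>3$ this position has exactly two successors, namely $A = \{1^{n-4} \wedge 2^2\}$ (combining two $1$'s) and $B = \{1^{n-3} \wedge 3\}$ (combining $F_1$ and $F_2$). Here ``Player 2 makes the wrong move'' means Player 2 steps into whichever of $A, B$ is an $N$-position, after which Player 1, now to move, wins; so the content of the lemma is precisely that $A$ and $B$ are not both $P$-positions.

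The key observation, and the analogue of the trick in the proof of Theorem~\ref{thm:playertwowins}, is that $A$ and $B$ are joined by a single legal move: applying $\{F_2 \wedge F_2 \to F_1 \wedge F_3\}$ to the two $2$'s in $A$ produces $\{1^{n-4} \wedge 1 \wedge 3\} = \{1^{n-3} \wedge 3\} = B$, and this move is available for every $n \ge 4$. If $A$ were a $P$-position, then every move out of $A$---in particular $A \to B$---would land in an $N$-position, forcing $B$ to be an $N$-position; thus $A$ and $B$ cannot both be $P$-positions, and at least one of them is an $N$-position. This already exhibits a losing first move for Player 2, and combining it with Theorem~\ref{thm:playertwowins} (Player 2 does have a winning reply, so at least one of $A, B$ is a $P$-position) shows that exactly one of the two options is the ``wrong'' move and the other is winning. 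In either case, once Player 2 moves into the $N$-position it is Player 1's turn there, so Player 1 inherits a winning strategy by Zermelo.

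I expect the main obstacle to be expository rather than mathematical: one must stress that the argument is genuinely non-constructive, certifying that one of $A, B$ is an $N$-position without revealing which, so Player 1's winning strategy is produced only abstractly, exactly as in Theorem~\ref{thm:playertwowins}. The one computational point worth checking is the boundary case $n=4$, where $B = \{1 \wedge 3\}$ is already terminal (hence a $P$-position) while $A = \{2^2\}$ still admits the move $A \to B$; verifying this is what forces the hypothesis $n>3$ rather than $n \ge 3$.
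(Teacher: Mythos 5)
Your proof is correct, and it lives in the same non-constructive, game-tree framework as the paper's, but the contradiction is organized differently, and the difference is substantive. The paper works with the same two replies $A=\{1^{n-4}\wedge 2^2\}$ and $B=\{1^{n-3}\wedge 3\}$, and it too uses the split move $A\to B$ (that is exactly how $B$ appears among the children of $A$); but instead of stopping there, it assumes $A$ is a win for Player 2 and pushes the deduction one level further: both $B$ and $C=\{1^{n-5}\wedge 2\wedge 3\}$ are children of $A$, while $C$ is also the \emph{unique} child of $B$, so $C$ would have to be a Player-2 win both with Player 2 to move and with Player 1 to move --- the same ``same state on adjacent rows'' parity contradiction used for Theorem \ref{thm:playertwowins}. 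This longer route buys identification: the paper concludes that $A$ itself is a win for Player 1, so the wrong move is concretely $\{F_1 \wedge F_1 \to F_2\}$ and (by Theorem \ref{thm:playertwowins}) the move to $B$ is the correct one; your argument certifies only that at least one of $A,B$ is an $N$-position, without saying which, and you must invoke Theorem \ref{thm:playertwowins} to know that exactly one reply is wrong. What your shortcut buys is simplicity and uniformity: the edge $A\to B$ is legal for every $n\ge 4$, so $n=4$ is handled inside the general argument, whereas the paper's node $C$ requires $n\ge 5$ and the paper checks $n=4$ separately. One small correction to your last sentence: what forces the hypothesis $n>3$ rather than $n\ge 3$ is the degenerate case $n=3$, where Player 2 has only one legal move and so cannot err; the $n=4$ computation merely confirms that your argument applies at the boundary.
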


The Zeckendorf game as described so far only concerns a game on the Fibonacci sequence. However, using Theorem \ref{thm:genZeckendorf}, we can create new games on other positive linear recurrence sequences, though at present we can only obtain results similar to the Fibonacci case for special recurrences. We define a few terms before proposing a Generalized Zeckendorf game.

\begin{defi}[$k$-nacci Numbers]\label{defi:knaccinums}
We call any sequence defined by a recurrence $S_{i+1} = S_i +S_{i-1}+ \cdots + S_{i-k}$ a $k$-nacci sequence. The initial conditions are as follows: $S_1 = 1$, and for $1 \le n < k+1$ we have $S_{i+1} = S_i + S_{i-1} + \cdots + S_1 + 1.$ The terms $S_t$ are called $k$-nacci numbers.
\end{defi}

The $k$-nacci sequence generalizes the Fibonacci sequence by extending the number of prior consecutive terms added up to get the next in the sequence. The Fibonaccis may be viewed as $1$-naccis, and the Tribonaccis may be viewed as $2$-naccis.

\begin{defi}[Generalized $k$-nacci Numbers]\label{defi:genknaccis}
We call any sequence defined by a recurrence $S_{i+1} = cS_i +cS_{i-1}+ \cdots + cS_{i-k}$ a generalized $k$-nacci sequence with constant $c$. The initial conditions are as follows: $S_1 = 1$, and for $1 \le n < k+1$ we have $S_{i+1} = cS_i + cS_{i-1} + \cdots + cS_1 + 1$. The terms $S_t$ are called $(c,k)$-nacci numbers.
\end{defi}

Generalized $k$-nacci numbers apply a constant $c$ in front of each term added to create the next in the sequence. As an example, the Fibonaccis are $(1,1)$-nacci numbers.

\begin{defi}[The Two-Player Generalized Zeckendorf Game]\label{defi:genzeckgame}
Two people play the Generalized Zeckendorf game for the $k$-nacci numbers. At the beginning of the game, we have an unordered list of $n$ 1's. If $i < k+1$, $S_{i+1} = cS_i + cS_{i-1}+\cdots+cS_1 + 1$. If $i\geq k$, $S_{i+1} = cS_i +cS_{i-1}+\cdots+ cS_{i-k}$. Therefore our initial list is $\{S_1^n\}$. On each turn we can do one of the following moves.
\begin{enumerate}
\item If our list contains $k+1$ consecutive k-nacci numbers each with multiplicity $c$, then we can change these to $S_{i+1}$. We denote this move $\{cS_{i-k}\wedge cS_{i-k+1}\wedge \cdots \wedge cS_i \rightarrow S_{i+1}\}$.
\item If our list contains consecutive $k$-nacci numbers with multiplicity $c$ up to an index less than or equal to $k$, and $S_1$ with multiplicity $c+1$, we can do the move $\{(c+1)S_{1}\wedge cS_2 \wedge \cdots \wedge cS_{i} \rightarrow S_{i+1}\}$.
\item If the list has $c+1$ of the same $k$-nacci number $S_i$, then
\begin{enumerate}
\item if $i=1$, then we can change $(c+1)S_1$ to $S_2$, denoting this move $\{(c+1)S_1 \rightarrow S_2\}$;
\item if $1<i< k+1$, then we can change $(c+1)S_i$ to $S_{i+1}$, denoted by $\{S_i \wedge S_i \rightarrow S_{i+1}\}$;
\item if $i= k+1$, then we can do the move $\{(c+1)S_i \rightarrow S_{i+1} \wedge S_1\}$; and
\item if $i > k+1$, then we can do the move $\{(c+1)S_i \rightarrow S_{i+1} \wedge cS_{i-k-1} \}$.
\end{enumerate}
\end{enumerate}
Players alternate moving until no moves remain.
\end{defi}

Again, we may wonder whether this game is well defined and ends at the Generalized Zeckendorf decomposition for the given recurrence. It is, as we prove through the next theorem.

\begin{thm}[The Generalized Zeckendorf Game is Well-Defined]\label{thm:genzwd}
Every Generalized Zeckendorf game terminates within a finite number of moves at the Generalized Zeckendorf decomposition.
\end{thm}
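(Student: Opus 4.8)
The plan is to prove the statement in two independent parts: that every play halts after finitely many moves (\emph{termination}), and that the only positions from which no move is available are the Generalized Zeckendorf decomposition (\emph{correctness}). The backbone of both parts is that each of the six move-types is built from the defining identities $S_{i+1}=c(S_i+\cdots+S_{i-k})$ for $i\ge k+1$ and $S_{i+1}=c(S_i+\cdots+S_1)+1$ for $i\le k$ (for instance move 3(d) is just the rearrangement $(c+1)S_i = S_{i+1}+cS_{i-k-1}$ obtained by subtracting the recurrences for $S_{i+1}$ and $S_i$). Consequently every move preserves the total value $N=\sum_i a_i S_i$, where $a_i$ is the multiplicity of $S_i$ in the current list. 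Since each summand is at least $S_1=1$, this pins both the number of summands and all occupied indices to bounded ranges, namely $\sum_i a_i\le N$ and $S_i\le N$ for every $i$ with $a_i>0$; hence the entire game is confined to a finite set of states.

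For termination I would introduce the three quantities
\[
M_1=\sum_i a_i,\qquad M_2=\sum_i a_i\,i,\qquad M_3=\sum_i a_i\,i^2,
\]
and track the vector $(M_1,M_2,-M_3)$ under lexicographic order. A short sign check over the move list shows this vector \emph{strictly} decreases at every step: the combining moves (1, 2, 3(a), 3(b)) strictly lower $M_1$; the splitting moves 3(c) and 3(d) either lower $M_1$ (whenever $c>1$) or leave it fixed, in which case $M_2$ strictly drops, since the change in $M_2$ is $1-c(k+1)\le-1$ for move 3(d) and $1-k$ for move 3(c) with $c=1$. The single residual case is the move $\{S_2\wedge S_2\to S_1\wedge S_3\}$ (the $c=k=1$ instance of 3(c)), which leaves both $M_1$ and $M_2$ fixed but raises $M_3$ by $2$, so $-M_3$ strictly decreases. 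As the three coordinates take integer values in a bounded range over the finite state space, a strictly lexicographically decreasing sequence must be finite, so the game ends.

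For correctness I would show that a position admitting no move is \emph{legal} in the sense of Definition \ref{def:goodrecurrence}, and then invoke the uniqueness asserted in Theorem \ref{thm:genZeckendorf}. The moves are engineered to remove exactly the two ways a decomposition can fail to be legal: a coefficient $a_i\ge c+1$ forces precisely one of moves 3(a)--3(d) (according as $i=1$, $1<i<k+1$, $i=k+1$, or $i>k+1$), and a block of $k+1$ consecutive indices all of multiplicity $\ge c$ forces move 1 (its $i=k+1$ case covering the junction with the initial-condition block, while move 2 is an auxiliary shortcut whose precondition $a_1\ge c+1$ already triggers 3(a)). Thus at a terminal position we must have $a_i\le c$ for all $i$ and no $k+1$ consecutive coefficients all equal to $c$, which is precisely Conditions 1 and 2 specialized to $c_1=\cdots=c_{k+1}=c$. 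The terminal position is therefore a legal decomposition of $N$, and by uniqueness it is the Generalized Zeckendorf decomposition.

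The main obstacle is the termination step: no single one of $M_1,M_2,M_3$ is monotone (a combining move can raise or lower $M_3$, and both $M_1$ and $M_2$ are only weakly decreasing), so the delicate point is verifying that the lexicographic combination is strictly decreasing and, in particular, isolating the lone move $\{S_2\wedge S_2\to S_1\wedge S_3\}$ that is invisible to both $M_1$ and $M_2$ and must be controlled by the third coordinate. A secondary care point is confirming that the move list is exhaustive in the correctness step, i.e.\ that every illegal position genuinely exposes one of the six moves, including at the boundary between the initial-condition regime and the recurrence regime.
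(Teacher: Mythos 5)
Your proof is correct, but its termination half takes a genuinely different route from the paper's. The paper proves termination via a single additive monovariant $\delta = \sum_i a_i + \sum_i a_i\, i$ (number of terms plus sum of indices, Lemma \ref{lem:gzmono}), shows it strictly decreases under every move \emph{except} in the Fibonacci case $c=k=1$ (where the splitting move $\{S_2\wedge S_2\to S_1\wedge S_3\}$ leaves $\delta$ fixed), and patches that one case by citing the separate monovariant of \cite{BEFMcant}. You instead keep the two quantities separate and add a third, tracking $(M_1,M_2,-M_3)$ lexicographically; your sign computations check out (in particular $\Delta M_2 = 1-c(k+1)\le -1$ for move 3(d), $\Delta M_2 = 1-k$ for move 3(c) with $c=1$, and $\Delta M_3=+2$ for the lone Fibonacci splitting move), and your observation that value-preservation confines play to a finite state space makes the lexicographic decrease sufficient. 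So your argument is self-contained and handles all $(c,k)$ uniformly, including Fibonacci---something the paper's own lemma explicitly cannot do. What the paper's approach buys in exchange is quantitative: since $\delta$ is a single integer decreasing by at least $1$ per move, it immediately yields the upper bound of Lemma \ref{lem:upboundgenzeck} on game length, which a lexicographic monovariant does not give so cleanly. The correctness halves of the two proofs are essentially the same (terminal positions admit no move, hence are legal decompositions, and then uniqueness from Theorem \ref{thm:genZeckendorf} finishes); yours is somewhat more careful about why the move list exhausts all illegalities, including at the junction with the initial-condition block. One harmless slip: move 3(d) never lowers $M_1$ for any $c$ (it sends $c+1$ terms to $1+c$ terms), so your parenthetical ``whenever $c>1$'' really applies only to 3(c); but your covering disjunct ($M_1$ fixed and $M_2$ strictly drops) handles 3(d) in all cases, so nothing breaks.
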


The proof of this theorem proceeds by defining a monovariant that enables another useful result about the length of games.

\begin{lem}[Upper Bound on the Generalized Zeckendorf Game]\label{lem:upboundgenzeck}
All Generalized Zeckendorf Games (apart from Fibonacci) end in at most $2n - GZD(n) - IGZD(n)$ moves, where $GZD(n)$ is the number of terms in the Generalized Zeckendorf Decomposition of $n$ and $IGZD(n)$ is the sum of the indices in the same decomposition.
\end{lem}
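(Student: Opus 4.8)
The plan is to exhibit the monovariant announced just before the statement and to show it strictly decreases on every move. For any list $L$ that can occur during play, let $S(L)$ denote its number of summands (counted with multiplicity) and let $I(L)$ denote the sum of the indices of its summands (again with multiplicity); set $\mathcal{P}(L)=S(L)+I(L)$. The opening list $\{S_1^n\}$ has $S=n$ and $I=n$, hence $\mathcal{P}=2n$. By Theorem~\ref{thm:genzwd} the game ends at the Generalized Zeckendorf decomposition of $n$, whose summand count is $GZD(n)$ and whose index sum is $IGZD(n)$, so the terminal value is $\mathcal{P}=GZD(n)+IGZD(n)$. Consequently, if every legal move lowers $\mathcal{P}$ by at least $1$, then a game of $M$ moves satisfies $M\le \mathcal{P}_{\mathrm{initial}}-\mathcal{P}_{\mathrm{final}}=2n-GZD(n)-IGZD(n)$, the asserted bound. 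The whole lemma thus reduces to a single claim: off the Fibonacci case, each move decreases $\mathcal{P}$ by at least one.

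To prove that claim I would compute $\Delta\mathcal{P}=\Delta S+\Delta I$ for each move of Definition~\ref{defi:genzeckgame}. The two combining moves (types~1 and~2) collapse $c(k+1)$, respectively $ci+1$, summands into a single one, so $\Delta S\le -1$, and a direct check using the recurrence gives $\Delta I\le 0$; these cases are comfortably below $-1$. The merging moves 3(a) and 3(b), both of the form $(c+1)S_i\to S_{i+1}$, give $\Delta S=-c$ and $\Delta I=1-ci$, hence $\Delta\mathcal{P}=1-c(i+1)\le -1$. The delicate case among the remaining moves is the splitting move 3(d), $(c+1)S_i\to S_{i+1}\wedge cS_{i-k-1}$, which leaves the summand count unchanged ($\Delta S=0$), so the entire decrease must be supplied by the index term; here one computes $\Delta I=1-c(k+1)$, which is at most $-1$ because $c,k\ge 1$. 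This is precisely why the sum $S+I$, rather than $S$ or $I$ alone, is the correct monovariant: move 3(d) is invisible to $S$, while the ``carry'' move 3(a) with $c=1$ is invisible to $I$.

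The main obstacle is move 3(c), $(c+1)S_{k+1}\to S_{k+2}\wedge S_1$. A short computation gives $\Delta S=1-c$ and $\Delta I=2-c(k+1)$, so $\Delta\mathcal{P}=3-c(k+2)$. This is $\le -1$ whenever $c\ge 2$ or $k\ge 2$, but it equals $0$ exactly when $c=k=1$, i.e. the Fibonacci game. Thus move 3(c) is the structural reason the lemma must exclude Fibonacci, and checking that it (and every other move) yields a strict decrease off that single locus is the crux of the argument. The remaining work is routine bookkeeping to confirm each $\Delta I$ above, which follows from the recurrence together with the initial-condition identities $S_{i+1}=cS_i+\cdots+cS_1+1$ for $i<k+1$ that make the combining and merging moves value-preserving in the first place. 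Once the per-move inequality is established, the telescoping estimate $M\le 2n-GZD(n)-IGZD(n)$ is immediate.
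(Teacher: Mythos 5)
Your proposal is correct and takes essentially the same approach as the paper: the paper proves that the quantity (number of summands plus sum of indices) strictly decreases on every move outside the Fibonacci case in its preceding monovariant lemma, and then obtains the bound by exactly your telescoping argument from the initial value $2n$ to the terminal value $GZD(n)+IGZD(n)$. Your per-move computations, including the identification of move 3(c) at $c=k=1$ as the unique non-decreasing case that forces the Fibonacci exclusion, agree with the paper's case analysis.
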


It is worth noting that the monovariant for the Generalized Zeckendorf Game gives a better upper bound than in the Fibonacci case. This monovariant does not apply only over the Fibonacci sequence. Additionally, if we try to expand the scope of the Generalized Zeckendorf game to PLRS other than generalized $k$-nacci numbers, we either struggle with to find \textit{any} monovariant, not just a nice one, or we cannot define a splitting move with the recurrence. Future work can try to address these problems.


\section{The Zeckendorf Game}

As someone must always make the final move, and as the game always ends at the Zeckendorf decomposition, there are no ties. Therefore one player or the other has a winning strategy for each $n$. This section is devoted to proofs of winning strategies. Specifically, Player 2 has the winning strategy for all $n >2$, the statement of Theorem \ref{thm:playertwowins}. If Player 2 makes an error on his first move, Player 1 can force a victory. For the proof of the both claims, we use a visual aid provided in Figure \ref{fig:tree}.

\begin{figure}[h]
\includegraphics[scale=.8]{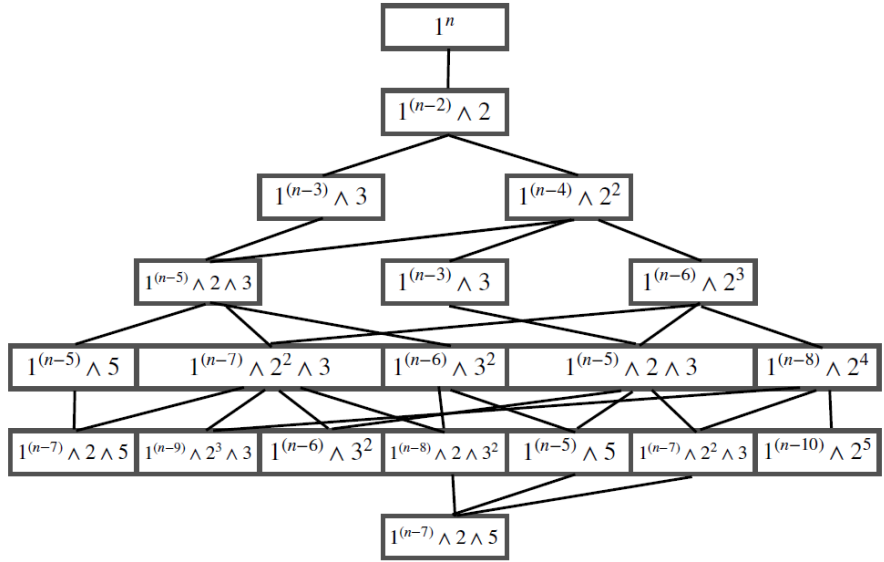}
\caption{Tree depicting the general structure of the first several moves of the Zeckendorf game.}
\label{fig:tree}
\end{figure}

\begin{proof}
For the entirety of this proof, we color boxes in Figures \ref{fig:treeproof1} and \ref{fig:treeproof2} red if we are assuming or have deduced that Player 1 has a winning strategy from the node and blue if Player 2 does.

Assume that Player 1 wins the game, in other words, that Player 1 has the winning strategy from the initial node at the top of the tree. From this game state, only one move can be made, regardless of the size of $n$. For Player 1 to have the winning strategy for the whole game, it thus follows that they must have one from the only node on the second row of the tree. Player 2 moves next, so Player 1 must have the winning strategy from all nodes in row 3; if not, Player 2 would simply move to the one where Player 1 did not have the winning strategy. The first node on the third row has only one child, so its descendant in row 4 must also have a winning strategy for Player 1. Player 2 moves on the nodes in row 4, so Player 1 must have a winning strategy from all three children of $\{1^{(n-5)}\wedge 2\wedge 3\}$ in row 5.

	One of the children in row 5 is $\{1^{(n-5)}\wedge 5\}$. Observe that in row 6 of the tree this same game state may be found. If Player 1 has the winning strategy from that state in row 5, by following the same strategy, Player 2 can arrive at victory from the node in row 6 by reasons of parity. Player 2 also therefore must possess the winning strategy from the only child of that node in row 7. Now, this implies that any parent of that game state in row 6 must also bear a winning strategy for Player 2 because, as it is their turn in row 6, they could move to $\{1^{(n-7)}\wedge 2 \wedge 5 \}$ in row 7 from each of those. Accordingly, Player 2 must be able to win from $\{1^{(n-8)}\wedge 2 \wedge 3^{(2)} \}$ in row 6. Yet, we have now found that both children in row 6 of $\{1^{(n-6)}\wedge 3^{(2)}\}$ in row 5 have winning strategies for Player 2, contradicting our early claim that the node held a winning strategy for Player 1. The theorem is thus proven for all $n$ whose game tree possesses 7 layers or more ($n\geq 9$). For the small cases of $2<n<9$, computer code such as the one referenced in Appendix \ref{sec:Jcode} can show that Player 2 has the winning strategy by brute force.
\end{proof}

\begin{figure}[h!]
\includegraphics[scale=.8]{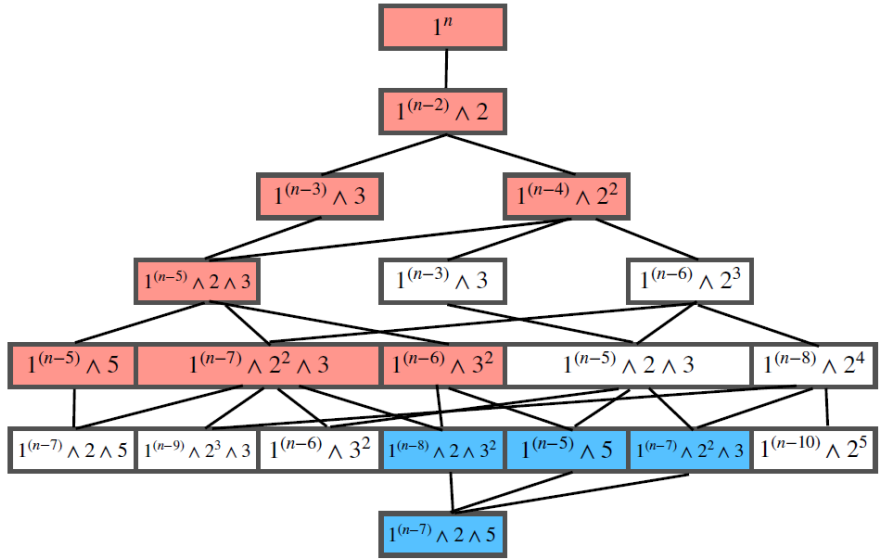}
\caption{Tree depicting the proof of Theorem \ref{thm:playertwowins}. Red boxes have a winning strategy for Player 1, and blue boxes indicate a winning strategy for Player 2.}
\label{fig:treeproof1}
\end{figure}

We follow a similar proof strategy for Lemma \ref{lem:p1retaliate}.

\begin{proof}
Suppose that Player 2 has a winning strategy from the second box in the third row of the game tree on $n$. Then, since Player 1 makes the move from that node, all of the descendants must have a winning strategy for Player 2. Then, the middle node, having only one child, must have that child also be a winning strategy for Player 2. Notice though that $\{1^{(n-5)}\wedge 2 \wedge 3\}$ can be found on both rows 4 and 3. Since Player 2 has the winning strategy from that node in row 4, it follows that Player 1 must have a strategy from row 3. This is a contradiction, and it shows that Player 2 can compromise their potential victory as early as their first move (the second move of the game). Again, this proof works for $n$ sufficiently large ($n \geq 5$). For the special case of $n=4$, Player 1 wins immediately after Player 2 executes the wrong move.
\end{proof}

\begin{figure}[h!]
\includegraphics[scale=.8]{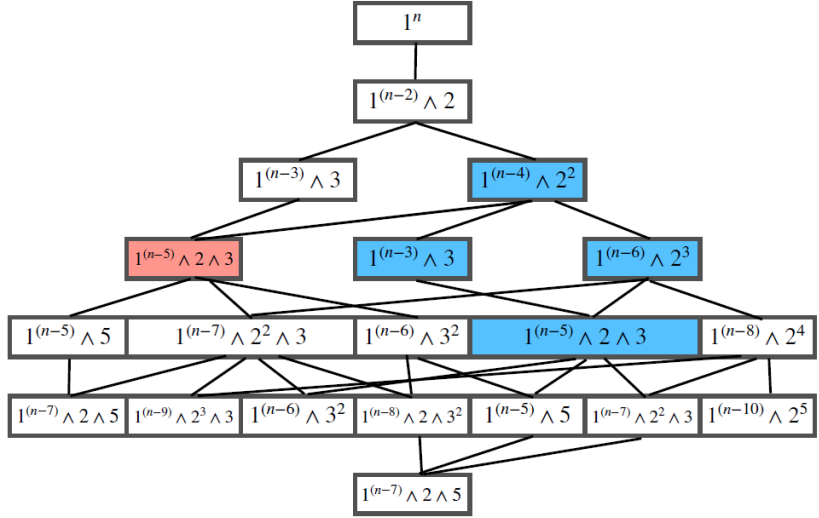}
\caption{Tree depicting the proof of Lemma \ref{lem:p1retaliate}. Red boxes have a winning strategy for Player 1, and blue boxes indicate a winning strategy for Player 2.}
\label{fig:treeproof2}
\end{figure}

These results from Theorem \ref{thm:playertwowins} and Lemma \ref{lem:p1retaliate} are both interesting and surprising. Game trees for large $n$ have many nodes, with no obvious path to victory for either player (see Figure \ref{fig:smalltree} for $n=9$ and Figure \ref{fig:bigtree} for $n=14$ for an example of how quickly the number of nodes grows). These are also both only existence proofs, without indication of how Player 2 or Player 1 should move in general (except that Player 2 should move in a specific way on their first turn). The uncertainty in the achievement of these winning strategies makes the game seem less unfair in play. In fact, random simulations show that Players 1 and 2 win about as often as each other.

\begin{figure}[h!!]
\includegraphics[scale=.85]{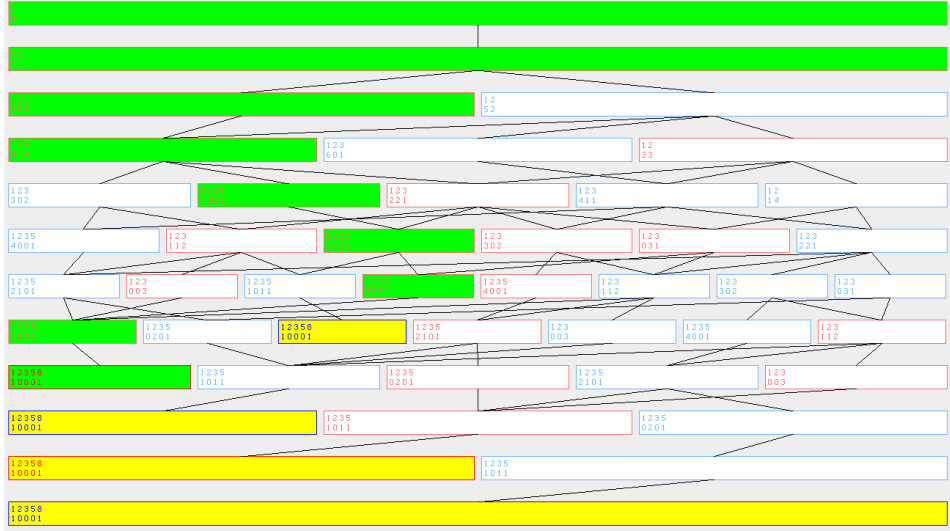}
\caption{Game tree for $n=9$, showing a winning path in green. Image courtesy of the code referenced in Appendix \ref{sec:Jcode}.}
\label{fig:smalltree}
\end{figure}

\begin{figure}[h!!]
\includegraphics[scale=.85]{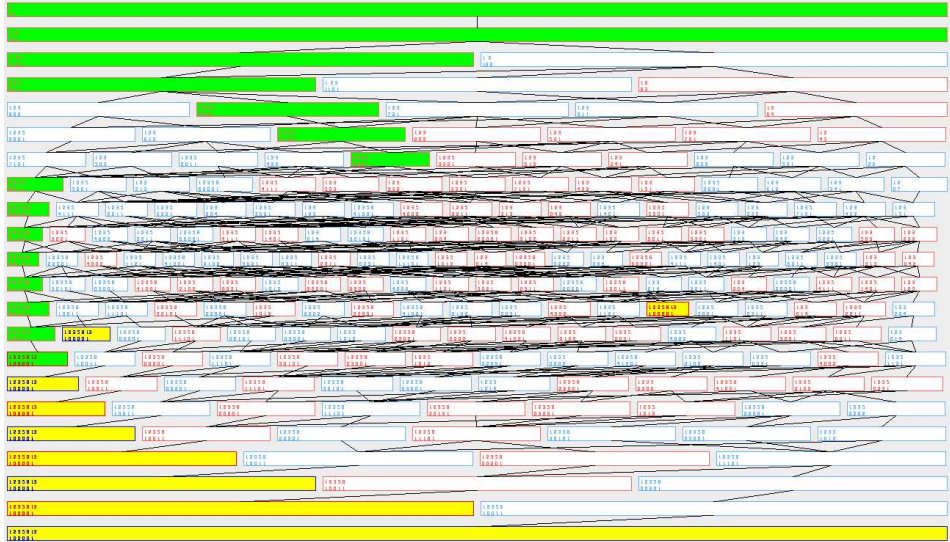}
\caption{Game tree for $n=14$, showing a winning path in green. Image courtesy of the code in Appendix \ref{sec:Jcode}.}
\label{fig:bigtree}
\end{figure}

\section{The Generalized Zeckendorf Game}


\subsection{The Generalized Game is Playable}
This section examines the generalization of the Zeckendorf Game to a particular class of positive linear recurrence relations called the generalized $k$-naccis (see Definition \ref{defi:genknaccis}). The generalized game's rules are set out in Definition \ref{defi:genzeckgame}. Some of the rules, particularly the ones on the splitting moves, seem un-intuitive. We assure the reader that these moves can be derived from the recurrence without too much difficulty. We start with a lemma that helps prove that this game is well defined, the statement of Theorem \ref{thm:genzwd}.

\begin{lem}[Generalized Zeckendorf Monovariant]\label{lem:gzmono}
The sum of the number of terms plus the sum of the indices of those terms is a monovariant for the Generalized Zeckendorf Game, in all cases but the game on the Fibonacci relation.
\end{lem}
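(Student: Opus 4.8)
The plan is to exhibit an explicit integer-valued quantity attached to each game state and show that every legal move strictly increases it, while also being bounded above, so that it serves as a monovariant forcing termination. For a multiset of $k$-nacci terms written as $\sum_i a_i S_i$ (with $a_i$ the multiplicity of $S_i$), I would define
\[
\Phi \ = \ \sum_i a_i \ + \ \sum_i a_i \cdot i,
\]
that is, the total number of summands plus the sum of all the indices (counted with multiplicity). The entire content of the lemma is the claim that each of the moves in Definition \ref{defi:genzeckgame} increases $\Phi$ by a positive amount, so the bulk of the proof is a case-by-case verification running over the combining move, the initial-condition combining move, and the four splitting subcases.

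The key steps, in order, are as follows. First I would verify the combining move (1), $cS_{i-k}\wedge\cdots\wedge cS_i \to S_{i+1}$: here the number of terms drops from $c(k+1)$ to $1$, so the term-count contribution changes by $1-c(k+1)$, while the index-sum contribution changes by $(i+1)-c\sum_{j=0}^{k}(i-j)$; I would add these and check the total change is strictly positive (this is where the hypothesis $c\ge 1$, and the exclusion of the Fibonacci case $c=k=1$, must enter). Second, the initial-condition combining move (2) is handled the same way, accounting for the single extra copy of $S_1$. Third, and most importantly, I would treat the splitting moves (3a)--(3d). The dangerous subcases are the ones where the number of terms does not increase: for instance (3d), $(c+1)S_i \to S_{i+1}\wedge cS_{i-k-1}$, sends $c+1$ terms to $c+1$ terms, so the term-count contribution is unchanged and all the gain must come from the index sum, whose change is $(i+1)+c(i-k-1)-(c+1)i = 1 - c(k+1)$. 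For $c=k=1$ this equals $-1$, which is exactly why the Fibonacci relation must be excluded, whereas for the generalized $k$-naccis one checks it is compensated; I would need to recompute $\Phi$ rather than its pieces to see the net effect is positive.

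I expect the genuine obstacle to be precisely subcase (3d) (and to a lesser extent (3c)), because there the summand count is preserved or decreases, so positivity of the change in $\Phi$ rests entirely on the index-sum term and on getting the arithmetic of $1-c(k+1)$ versus the compensating contributions exactly right. This is also the step that pins down \emph{why} the Fibonacci case genuinely fails and must be carved out: when $c=1$ and $k=1$, the quantity $\Phi$ is not a monovariant for the split $(c+1)S_i\to S_{i+1}\wedge cS_{i-k-1}$. Finally, I would remark that $\Phi$ is bounded above (for a fixed starting value $n$, the largest index that can appear is bounded and the total summand count is at most $n$), so a strictly increasing integer quantity can take only finitely many values; combined with the fact that the game ends at a legal decomposition when no moves remain and with Theorem \ref{thm:genZeckendorf} guaranteeing uniqueness of that decomposition, this yields Theorem \ref{thm:genzwd}.
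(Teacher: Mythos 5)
Your choice of $\Phi$ coincides exactly with the paper's monovariant $\delta$, and the case-by-case plan over the moves of Definition \ref{defi:genzeckgame} is the right skeleton, but you have the direction of monotonicity backwards, and this is not a harmless sign convention: if you carried out the verification you describe, every single case would fail. Each move \emph{decreases} $\Phi$. For move (1), the term count changes by $1 - c(k+1) < 0$ and the index-sum changes by $(i+1) - c\sum_{j=0}^{k}(i-j) \le 0$ (using $i > k$), so the total change is strictly negative, not ``strictly positive'' as you assert; the same holds for (2), (3a), and (3b). Correspondingly, the boundedness you need is boundedness below (trivial, since $\Phi \ge 0$), not above. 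The paper's proof shows $\delta$ strictly decreases under every move, with a single exceptional case.

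Your diagnosis of why the Fibonacci relation must be excluded is also wrong, and this is the more substantive error. You pin the failure on (3d), computing the change $1 - c(k+1) = -1$ for $c=k=1$; but under the correct (decreasing) convention a change of $-1$ is exactly what is wanted, so (3d) is fine for \emph{all} parameters, Fibonacci included. Your suggestion that for other $(c,k)$ this deficit ``is compensated'' is likewise off: there the change is $1 - c(k+1) \le -2$, still negative, with nothing compensating and no need for compensation. The genuine obstruction is (3c), the split at $i = k+1$, namely $(c+1)S_i \to S_{i+1} \wedge S_1$. There the change in $\Phi$ is $(i+4) - (c+1)(i+1)$ with $i = k+1$, which is strictly negative whenever $(c,k) \neq (1,1)$ but exactly $0$ in the Fibonacci case: the move $2F_2 \to F_3 \wedge F_1$ preserves both the number of terms ($2$) and the index sum ($4$). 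That zero --- not a sign problem in (3d) --- is why the Fibonacci relation is carved out of the statement of the lemma, and why the Fibonacci game needs the separate monovariant of \cite{BEFMcant}.
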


\begin{proof}
We define a monovariant $\delta$ on this game, where $\delta$ is the sum of the number of terms and the indices of the set in any given turn. We prove that our moves always decrease this monovariant except in the Fibonacci case ($c=1, k=1$). We note that $\delta$ is an additive function, so we can just examine what $\delta$ does to the subset of terms affected by the moves. We note that we follow the move numbering established in Definition \ref{defi:genzeckgame}.

Before we do move (1), we have a value in the summands we are using of $\delta(\{cS_{i-k}\wedge cS_{i-k+1}\wedge\cdots \wedge cS_i\}) = (k+1)c + c(i-k) +\cdots + ci \geq c + 2ci$. After, we have a value of $\delta(\{S_{i+1}\}) = 1 + i + 1 = i+2$. We note that this move only happens when $i > k \geq 1$, so $c+2ci > 1+ 2i > i+2$ for all positive $c$ and all valid $i$. Therefore the monovariant decreases when move (1) is executed.

Before we do move (2), we have a value of $\delta(\{(c+1)S_{1}\wedge cS_2 \wedge \cdots \wedge cS_{i}\}) = ci + 1 + (c+1) + 2c + \cdots + ic \geq ci + c + 2$. After the move, we have a value of $\delta(\{S_{i+1}\}) = 1 + i+1 = i+2$. For all positive $c$ and for all $i$, $ci + c +2 > i +2$. Therefore the monovariant decreases when move (2) is executed.

Before we do move (3a), we have a value of $\delta(\{(c+1)S_1\}) = c+1 +c +1 = 2c +2$. After, we have $\delta(\{S_2\}) = 1 + 2 = 3$. For all positive $c$, $2c + 2 > 3$, so the monovariant decreases.

Before we do moves (3b), (3c), and (3d), we have a value of $\delta(\{(c+1)S_i\}) = c+1 + (c+1)i = c + 1 + ci + i$. After doing (3b), we get $\delta(\{S_{i+1}\}) = 1 + i + 1 = i + 2$. We see that $c + 1 + ci + i \geq 2i + 2 > i +2$ for all positive $c$, so the monovariant holds. After doing (3c), we get $\delta(\{S_1 + S_{i+1}\}) = 2 + 1 + i + 1 = i + 4$. If $c=1, i =2$ (the Fibonacci case), then $c + 1 + ci + i = 6 = i + 4$. However, if we assume that $i=k+1 > 2$, then $c+ 1 + ci + i > 2i + 2 > i + 4$. So if $k>1$, then the monovariant holds for any positive $c$. On the other hand, if we require that $c>1$, then $c + 1 + ci + i \geq 3 + 3i>i + 4$ for all $i$ (and hence $k$), so the monovariant holds for all (3b) except when the recurrence relation is Fibonacci. After (3d) we have $\delta(\{S_{i+1}\wedge cS_{i-k-1}\}) = c+1 + i + 1 + ci -ck -c = ci + 2 + i -ck < ci + 2 + i$. We know that $c+1 + ci + i \geq 2 + ci + i$ for all $c$, so the monovariant holds in this case as well.

Since the value of delta on the summands employed in the moves always decreases, this is truly a monovariant (in all cases but Fibonacci).
\end{proof}

We can now prove Theorem \ref{thm:genzwd}.

\begin{proof}
Given the monovariant $\delta$ established in Lemma \ref{lem:gzmono} and the monovariant developed for the special case of the Fibonacci recurrence shown in \cite{BEFMcant}, we know that there are no repeat turns in the Generalized Zeckendorf Game. Moreover, since there are only a finite number of partitions of $n$ among any positive linear recurrence sequence bounded by $n$, this means that the game must end somewhere. The game must end at the Generalized Zeckendorf decomposition laid out in Theorem \ref{thm:genZeckendorf} because if the recurrence relation can be applied again, the game has not terminated, and if there are more than $c$ duplicates of any term, the game has not terminated. If the recurrence relation cannot be applied, and there are at most $c$ of any term, this is exactly the Generalized Zeckendorf decomposition by its uniqueness. Therefore the game is well-defined.
\end{proof}

\begin{rek}
Having different constants $c_i$ in Generalized Zeckendorf games for generalized $k$-naccis is beyond the scope of this paper because of the complexity of the structure of the Generalized Zeckendorf decomposition for these recurrence relations. Also, other relations do not necessarily define splitting moves, so the game would be deterministic (and therefore boring).
\end{rek}


\subsection{Bounds on the Length of Generalized Zeckendorf Games}

Like in the Fibonacci Zeckendorf Game, we consider upper and lower bounds for the Generalized Zeckendorf Game. We begin with the proof of Lemma \ref{lem:upboundgenzeck}.

\begin{proof}
This follows immediately by the existence of the monovariant and the fact that the monovariant decreases by at least 1 each time.
\end{proof}

\begin{defi}[Tribonacci Sequence]
We define the Tribonacci Sequence as the recurrence relation $R_{n+1} = R_n+ R_{n-1} +R_{n-2}$ with the base cases $R_1 = 1, R_2=2, R_3=7$.
\end{defi}

\begin{lem}[A Deterministic Tribonacci Game]\label{lem: detTrib}
We play a Tribonacci Generalized Zeckendorf Game where we always act on the greatest valued summand with an available move. This game is deterministic and will not involve any splitting moves.
\end{lem}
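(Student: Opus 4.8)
The plan is to prove Lemma~\ref{lem: detTrib} by induction on the magnitude of the largest summand, showing that whenever we act greedily on the greatest-valued summand that admits a move, that move is uniquely determined and is always a combining move, never a splitting move. First I would set up the invariant that I will maintain throughout the game: at any point where we are about to act, let $R_j$ be the largest index for which the multiplicity of $R_j$ in the list is nonzero, and suppose inductively that all summands $R_m$ with $m > $ (current maximum index acted upon) appear with multiplicity strictly less than $c+1=2$, i.e.\ multiplicity at most $1$. Because we start from $\{R_1^n\}$ (only $R_1$'s, all equal to $1$) and always act on the greatest summand available, the claim is that higher-index terms are produced one at a time and never accumulate to the splitting threshold before the lower terms are exhausted.

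The key steps, in order, are as follows. Step one: identify which moves are available at the greatest summand. For the Tribonacci recurrence ($c=1$, $k=2$) the splitting moves (3c) and (3d) require $c+1=2$ copies of some $R_i$; the combining move (1) requires three consecutive indices $R_{i-2}, R_{i-1}, R_i$ each present with multiplicity $c=1$; moves (2), (3a), (3b) handle the low-index boundary. Step two: argue that when we process the largest index $R_j$ greedily, the only move ever enabled there is a combining move that consumes the run of consecutive terms ending at $R_j$ and replaces it with $R_{j+1}$ (or, at the base of the sequence, the appropriate initial-condition combination via (2) or (3a)/(3b)). Step three: verify that this combining move cannot produce two copies of any single term, so no splitting move is ever triggered later either; in particular $R_{j+1}$ is created with multiplicity one, consistent with the invariant. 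Step four: observe that because each greedy step strictly increases the largest index present or strictly decreases the total count at the top level, and because Lemma~\ref{lem:upboundgenzeck} guarantees the game terminates, the greedy process is a well-defined finite deterministic sequence of combining moves.

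I would organize the verification around the structure of the list as a sequence of multiplicities $(a_1, a_2, \dots)$ and track how the greedy rule transforms it. The heart of the argument is a careful case analysis at the top of the list: given that every index above the one we act on has multiplicity at most one, I must show that at $R_j$ either there is a complete consecutive run $R_{j-2},R_{j-1},R_j$ (each with the required multiplicity) enabling move (1), or we are against the initial-condition boundary where (2), (3a), or (3b) applies, and in no case do we have the two equal copies that would force (3c) or (3d). Uniqueness of the move then follows because acting always on the \emph{greatest} valued summand with an available move pins down both which summand and (given the multiplicity constraints) which rule applies.

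The hard part will be step three and the bookkeeping of the invariant: I must confirm that the greedy combining moves genuinely never let a term reach multiplicity $c+1=2$ at the moment it becomes the greatest actionable summand, because a combining move at $R_j$ deposits an $R_{j+1}$ that could, in principle, collide with a previously deposited $R_{j+1}$. The resolution is that the greedy rule forces us to clear the top before returning to produce another copy at that level, so by the time a second $R_{j+1}$ would be created the first has already been combined upward; making this precise — essentially showing the greedy schedule is a left-to-right sweep that mirrors the carrying process in the Zeckendorf decomposition — is where the real content lies, and I would phrase it as the induction hypothesis that drives the whole proof.
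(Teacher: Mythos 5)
Your strategy — a forward induction on the moves of the game, maintaining the invariant that no term of index at least $2$ ever reaches multiplicity $2$ under greedy play — is genuinely different from the paper's argument, which runs \emph{backwards}: the paper supposes a duplicated copy of the largest term is present, reconstructs what the state must have been one or two moves earlier for that duplicate to have been created, and shows every such predecessor contradicts either the greedy rule or the maximality of that term. Your forward route can be made to work, and is arguably cleaner, but as written it has a genuine gap at exactly the step you flag as ``where the real content lies.'' The mechanism you propose there — that ``the greedy rule forces us to clear the top before returning to produce another copy at that level, so by the time a second $R_{j+1}$ would be created the first has already been combined upward'' — is not correct as stated. The existing copy need never be combined upward at all: from $\{R_4 \wedge R_1^{3}\}$ (a state greedy play actually reaches when $n = 10$), greedy plays $\{R_1 \wedge R_1 \to R_2\}$ and the game then ends with $R_4$ untouched. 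So you cannot argue the first copy is gone before a second could appear; phrased this way, the claim is essentially the conclusion you want rather than a reason for it.

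What actually closes the induction is a simpler observation: if the greedy move $M$ would \emph{create} a term $R_m$ while a copy of $R_m$ is already on the list, then the ingredients of $M$ together with that existing copy enable a move on $R_m$ itself; since $R_m$ is strictly larger than the summand $M$ acts on, greedy would act on $R_m$ (or on something larger still), so $M$ is never played in that situation. Concretely: move (3a) creates $R_2$ from $R_1 \wedge R_1$, and $R_1 \wedge R_1 \wedge R_2$ enables move (2) on $R_2$; move (2) creates $R_3$ from $R_1 \wedge R_1 \wedge R_2$, and $R_1 \wedge R_2 \wedge R_3$ enables move (1) on $R_3$; move (1) at $R_j$ creates $R_{j+1}$ from $R_{j-2} \wedge R_{j-1} \wedge R_j$, and $R_{j-1} \wedge R_j \wedge R_{j+1}$ enables move (1) on $R_{j+1}$. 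Note also that the dangerous collisions are not only ``at the top'': a second $R_2$ could in principle be deposited by (3a) while larger, inert terms sit above it, so the invariant must read ``every $R_m$ with $m \ge 2$ has multiplicity at most $1$,'' not merely a condition on terms above the last index acted upon, as you phrase it. With that invariant, moves (3b), (3c), (3d) are never enabled (each requires a duplicate of some $R_i$ with $i \ge 2$), exactly one move is available at the greatest actionable summand, and determinism plus the absence of splitting moves follow — the same dichotomy the paper establishes by predecessor analysis rather than by an invariant.
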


\begin{proof}
Suppose our largest integer is $R_n$. If $n=1$, then if there's an available move using $R_n$, it must be $\{R_1\wedge R_1 \rightarrow R_2\}$. If $n=2$, if there's a valid move on $R_n$, it must be $\{R_2\wedge R_2 \rightarrow R_3\}$. For both the case when $n=1, n=2$, we cannot have consecutive moves or else we would have to have $R_3$, a contradiction to $R_n$ being the largest integer. If $n=3$, at first glance it appears that there are two options for moves: $\{R_1\wedge R_2 \wedge R_3 \rightarrow R_4\}$ and $\{R_3\wedge R_3 \rightarrow R_1 \wedge R_4\}$. However, to get $\{R_3, R_3\}$, the turn before we must have had either $\{R_3\wedge R_2\wedge R_2\}$ or $\{R_3\wedge R_6 \wedge R_6\}$. If we had the latter set, then now we must have $R_7$ as a summand, contradicting the claim that $n=3$. If the turn before we had $\{R_3\wedge R_2\wedge R_2\}$ and we had an $\{R_1\}$, then we never would have joined the two $R_2$s because a consecutive move was available. If we did not have an $\{R_1\}$ then in order to create $\{R_3\wedge R_2 \wedge R_2\}$, the turn prior must have either been $\{R_3\wedge R_5 \wedge R_5 \}$, a contradiction, or $\{R_3 \wedge R_2 \wedge R_1 \wedge R_1\}$. So we would not have gotten $\{R_3\wedge R_2\wedge R_2\}$ if we had acted on the largest integer because we would have added consecutives the turn before. So really our only move option on the largest integer $R_3$ is $\{R_1\wedge R_2 \wedge R_3 \rightarrow R_4\}$. If $n>3$, then we might think we could do either $\{R_{n-2}\wedge R_{n-1} \wedge R_n \rightarrow R_{n+1}\}$ or $\{R_n \wedge R_n \rightarrow R_{n+1} \wedge R_{n-3}\}$. However, to arrive at $\{R_n \wedge R_n\}$, a turn before we must have had $\{R_n \wedge R_{n-1}\wedge R_{n-1}\}$ and no $\{R_{n-2}\}$ (or else we could have done a consecutive move on $\{R_n\}$ last turn), $\{R_n \wedge R_{n-1} \wedge R_{n-2} \wedge R_{n-3}\}$, which means we should already have added consecutives, or $\{R_n \wedge R_{n+3}\wedge R_{n+3}\}$, which is automatically a contradiction. But to get to $\{R_n \wedge R_{n-1}\wedge R_{n-1}\}$, we must have had either $\{R_n \wedge R_{n-1} \wedge R_{n+2} \wedge R_{n+2}\}$ a turn before, a contradiction, or $\{R_n \wedge R_{n-1} \wedge R_{n-2} \wedge R_{n-2}\}$ or $\{R_{n} \wedge R_{n-1} \wedge R_{n-2} \wedge R_{n-3} \wedge R_{n-4}\}$. In either of the latter cases, we would have done a consecutive move and there was no way for us to get $\{R_n \wedge R_n \}$ where $R_n$ is the largest integer and we always did a move on the largest integer with an available move.

Therefore we have shown that the game is deterministic because regardless of what the largest integer is, we only have one valid move. Moreover we have shown that all of the moves we will do in the deterministic game described either join 2 base case elements into another base element or add consecutives. We will never have a splitting move (or duplicates of $R_n$ for $n >3$).
\end{proof}

\begin{conj}[Deterministic Game is Best for Tribonacci]
The exact minimum amount of moves in the Tribonacci game is achieved by the greedy algorithm described in the deterministic game in Lemma \ref{lem: detTrib}.
\end{conj}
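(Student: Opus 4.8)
The plan is to convert the conjecture into a single combinatorial optimization and then argue that the greedy (deterministic) play is extremal for it. First I would tabulate, for each move type, the change in the total number of summands $P$ (counted with multiplicity); for the Tribonacci game ($c=1$, $k=2$) this is independent of the actual values $R_j$. The combining moves (1) and (2) send three tokens to one ($\Delta P=-2$), the base‑joining moves (3a) and (3b) send two tokens to one ($\Delta P=-1$), and the splitting moves (3c) and (3d) send two tokens to two ($\Delta P=0$). Since $P$ decreases from $n$ to $GZD(n)$ no matter how the game is played, writing $N_1,N_2,N_3$ for the number of moves of the three respective kinds gives $2N_1+N_2=n-GZD(n)$; as the total number of moves is $M=N_1+N_2+N_3$, eliminating $N_2$ yields the path‑independent identity
\[ M \;=\; \big(n-GZD(n)\big)-N_1+N_3. \]
Because $n-GZD(n)$ depends only on $n$, minimizing $M$ is exactly maximizing $N_1-N_3$, the number of combining moves net of splitting moves. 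By Lemma \ref{lem: detTrib} the deterministic game never splits, so there $N_3=0$ and $M_{\mathrm{det}}=(n-GZD(n))-N_1^{\mathrm{det}}$. Since the deterministic game is itself a legal play, this already gives $\min M\le M_{\mathrm{det}}$, so the conjecture reduces to the single inequality $N_1-N_3\le N_1^{\mathrm{det}}$ for every legal play.

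I would attack this inequality in two stages. Stage (a): an exchange argument reducing to split‑free plays. A split $\{2R_i\to R_{i+1}\wedge R_{i-3}\}$ (or $\{2R_3\to R_4\wedge R_1\}$) only manufactures a strictly smaller summand that must be reprocessed later, so I would try to show that any minimal play can be rewritten, without increasing $M$, so that it uses no splits — formally, that each split can be cancelled against, or postponed past, the moves that later consume its output. Stage (b): among split‑free plays, show that always acting on the largest available summand maximizes $N_1$, by induction on the largest index present together with local exchanges showing that performing the topmost available combine first forfeits no combine overall. Here one must be careful, since a top combine consumes $R_{i-2}$, which a lower combine producing $R_{i-1}$ would also need, so the commutations are not entirely free and must be tracked.

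The hard part is Stage (a): ruling out that a well‑placed split creates exactly the configuration needed for an additional high‑level combine, producing a net gain in $N_1-N_3$. One cannot forbid splits outright, since splitting is the only way to break a pair $\{2R_i\}$ that has no neighbours to combine with; the real content is that whatever a split enables is achievable at least as cheaply without it. A natural tool is a weighted token count $\Phi=\sum_j (\#R_j)\,w_j$ with weights chosen so that combines decrease $\Phi$ by at least $1$, joins do not increase it, and splits increase it by at most $1$, which would give $N_1-N_3\le\Phi_{\mathrm{init}}-\Phi_{\mathrm{final}}$ with equality realized by the greedy play. The difficulty is that the split move (3d) couples level $i$ to the distant level $i-3$, while the low‑index moves (2), (3a) and (3b) impose rigid relations on $w_1,w_2,w_3$ that are hard to reconcile with tightness on the greedy play; a single linear potential does not seem to suffice, which is presumably why the statement is only conjectured. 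I therefore expect the argument to require either a state‑dependent potential or a more delicate structural induction on the largest index, and as a safeguard I would first settle the inequality for $n$ with small Zeckendorf support and corroborate the general pattern with the code of Appendix \ref{sec:Jcode} before committing to the full potential construction.
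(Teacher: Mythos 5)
The statement you were asked to prove is left as a \emph{conjecture} in the paper: the authors prove only the two ingredients you also rely on (Lemma \ref{lem: detTrib}, that greedy play is deterministic and split-free, and Lemma \ref{lem:lowboundtrib}, the crude lower bound $(n-GZD(n))/2$), and they do not attempt the sharp minimality claim. So there is no paper proof to compare against; your attempt has to stand on its own.

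Your bookkeeping reduction is correct and is genuinely useful. For the Tribonacci game ($c=1$, $k=2$) the token counts are as you say: moves (1) and (2) have $\Delta P=-2$, moves (3a) and (3b) have $\Delta P=-1$, and the splits (3c), (3d) have $\Delta P=0$. Since every play starts with $n$ tokens and, by Theorem \ref{thm:genzwd}, ends with the same $GZD(n)$ tokens regardless of the path, the identity $M=(n-GZD(n))-N_1+N_3$ holds for every legal play, the conjecture becomes the single inequality $N_1-N_3\le N_1^{\mathrm{det}}$, and your identity even recovers the paper's Lemma \ref{lem:lowboundtrib} (use $2N_1\le n-GZD(n)$ and $N_3\ge 0$). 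The genuine gap is that the proposal stops at this reformulation: Stage (a) (splits can be removed without increasing $M$) and Stage (b) (greedy maximizes $N_1$ among split-free plays) are plans, not arguments, and you concede that the natural linear potential cannot both respect the constraints forced by the low-index moves and be tight on the greedy play. Concretely, the case your exchange argument must rule out — and does not — is a split $\{2R_i\to R_{i+1}\wedge R_{i-3}\}$ whose output $R_{i-3}$ completes a consecutive triple and enables an extra combine; that trade changes $N_1-N_3$ by $(+1)+(-1)=0$, so it is cost-neutral under your objective, and showing it can never yield a strict gain (nor cascade into one) is precisely the unresolved content of the conjecture. As it stands, you have an equivalent restatement and a plausible program, which is essentially the same state of knowledge at which the paper leaves the question.
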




\begin{lem}[Lower Bounds for the Tribonacci Game]\label{lem:lowboundtrib}
All games end in at least $(n-GZD(n))/2$ moves.
\end{lem}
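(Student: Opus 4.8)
The plan is to use the total number of summands in the list, counted with multiplicity, as a one-sided monovariant and to bound how much a single move can change it. Write $T$ for this count. The game begins from the list $\{R_1^n\}$, so initially $T=n$, and by Theorem~\ref{thm:genzwd} it terminates at the Generalized Zeckendorf decomposition of $n$; since the Tribonacci relation has $c=1$, every term of that decomposition occurs with multiplicity at most one, so its number of summands equals its number of terms, namely $GZD(n)$. Thus over the entire game $T$ drops from $n$ to $GZD(n)$, a total decrease of $n-GZD(n)$, independent of how the players move.

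Next I would run through the six move types of Definition~\ref{defi:genzeckgame} specialized to $c=1,\ k=2$ and record the effect of each on $T$. The combining moves $\{R_{i-2}\wedge R_{i-1}\wedge R_i\rightarrow R_{i+1}\}$ (move (1)) and $\{2R_1\wedge R_2\rightarrow R_3\}$ (move (2)) each turn three summands into one, decreasing $T$ by $2$. The merging moves $\{2R_1\rightarrow R_2\}$ (3a) and $\{2R_2\rightarrow R_3\}$ (3b) turn two summands into one, decreasing $T$ by $1$. The splitting moves $\{2R_3\rightarrow R_4\wedge R_1\}$ (3c) and $\{2R_i\rightarrow R_{i+1}\wedge R_{i-3}\}$ for $i>3$ (3d) turn two summands into two, leaving $T$ unchanged. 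Consequently $T$ never increases and decreases by at most $2$ on any single move.

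Putting the two pieces together, if a given play of the game lasts $M$ moves, then $n-GZD(n)$, the total decrease in $T$, is at most $2M$, so $M\ge (n-GZD(n))/2$. Since this holds for every sequence of legal moves, it is a lower bound for all Tribonacci games.

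The argument is essentially bookkeeping, so I do not expect a serious obstacle; the one point needing care is confirming that the splitting moves (3c) and (3d) are genuine two-for-two exchanges rather than moves that alter the summand count, which is exactly where the recurrence $R_{i+1}=R_i+R_{i-1}+R_{i-2}$ (equivalently $2R_i=R_{i+1}+R_{i-3}$ for $i>3$) and the initial conditions are used. Once the maximal per-move decrease of $2$ is established, the bound follows formally.
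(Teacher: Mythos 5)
Your proof is correct and follows the same approach as the paper: both arguments rest on the observation that a single move reduces the number of summands by at most $2$ (achieved only by the combining moves), while the total reduction over any game is exactly $n - GZD(n)$, forcing at least $(n-GZD(n))/2$ moves. Your version simply makes explicit the move-by-move bookkeeping that the paper's one-line proof leaves implicit, which is a welcome level of care but not a different method.
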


\begin{proof}
The most we can decrease terms by in any given move is 2 (if we combine consecutives). If at every step we used this move, we would arrive at the Generalized Zeckendorf decomposition of $n$ in $(n-GZD(n))/2$ moves.
\end{proof}

\begin{rek}[Difficulties in Lower Bounds]\label{rek:hardbounds}
In the Fibonacci Case, a lower bound on the number of moves was easy to figure out because all of the moves either changed the number of terms by $1$ or by $0$. We were also able to show that games exist that always decrease the number of terms by nonzero amounts. Though it is fairly certain that we could always find a game that decreases the number of terms by nonzero amounts in the Generalized Zeckendorf Game by using a greedy algorithm such as is proved for the special case of the Tribonacci Game in Lemma \ref{lem: detTrib}, the other moves vary from $1$ to $ck$ in the number of terms they remove. This makes it difficult to get a sharp lower bound without knowing the minimal number of times the game requires each type of move. This is not an easy problem, even in the case of the Tribonacci numbers, and is left to future work.
\end{rek}


\subsection{Conjectures on Generalized Zeckendorf Games}

\begin{conj}
Player 2 has the winning strategy in the Tribonacci Game for $n$ sufficiently large.
\end{conj}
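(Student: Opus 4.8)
The plan is to adapt the non-constructive game-tree argument used to prove Theorem \ref{thm:playertwowins} to the Tribonacci setting. The engine of that proof is a \emph{strategy-transfer principle}: since play in the Generalized Zeckendorf Game of Definition \ref{defi:genzeckgame} depends only on the current multiset of summands and never on the history, and since by Theorem \ref{thm:genzwd} every game terminates at the unique Generalized Zeckendorf decomposition with no ties, the assertion ``the player to move can force being the last to move'' is a determined property of the game state alone. Consequently, if a single state $G$ occurs at two nodes of the tree whose depths have opposite parity, then the identity of the player ``about to move'' has flipped between them: if at the occurrence where Player $1$ is to move Player $1$ has the winning strategy, then at the opposite-parity occurrence (Player $2$ to move) that same line of play is a winning strategy for Player $2$. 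My first step would be to record this principle precisely for the case $k=2$, $c=1$.

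Next I would map out the first several rows of the Tribonacci tree rooted at $\{1^n\}$, as Figure \ref{fig:tree} does for the Fibonacci game. From the root only the move $\{R_1 \wedge R_1 \to R_2\}$ is available, so the opening is forced for a row before the branching into $\{1^{n-4}\wedge 2^2\}$ and $\{1^{n-4}\wedge R_3\}$ begins, and I would push this limited-branching ``spine'' as far down as possible. Assuming for contradiction that Player $1$ has the winning strategy from the root, I would then propagate labels down the tree using the two rules from the proof of Theorem \ref{thm:playertwowins}: at a node where it is Player $2$'s turn \emph{every} child must be a Player-$1$ win, or else Player $2$ deviates; and at a single-child node where it is Player $1$'s turn the unique child inherits the Player-$1$ label. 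The aim of this bookkeeping is to locate an early node $N$ in an odd row (Player $1$ to move) that is forced to carry a Player-$1$ label, together with a state that reappears exactly one row later than an occurrence at which the propagation already pins it as a Player-$2$ win.

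Having exhibited such a recurring state, I would invoke the transfer principle to conclude that each child of $N$ is a Player-$2$ win, so that from $N$ every move of Player $1$ hands the turn to a position won by the player to move (now Player $2$); this contradicts the Player-$1$ label on $N$ and forces the conclusion that Player $2$ wins from the root. The argument only activates once the tree is deep enough to contain this recurring-state ``diamond,'' so it would prove the result for all $n$ past an explicit threshold, with the finitely many smaller cases settled by brute force using the code referenced in Appendix \ref{sec:Jcode}, exactly as in the Fibonacci case.

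The main obstacle is the search in the middle step. The Tribonacci tree branches far faster than the Fibonacci tree, because Definition \ref{defi:genzeckgame} now supplies three-term combining moves, the $\{(c+1)S_1 \wedge \cdots \to S_{i+1}\}$ move, and genuine splitting moves, so the clean, nearly-deterministic opening that made the Fibonacci recurring state $\{1^{(n-5)}\wedge 5\}$ easy to spot is absent here. The crux is therefore to prove that some game state genuinely recurs at two depths of opposite parity early enough in the tree, and that its ancestors can be controlled well enough for the propagation rules to force the required labels. Unlike the lower-bound difficulties flagged in Remark \ref{rek:hardbounds}, the issue here is combinatorial control of the branching rather than of move counts, and it is precisely why the statement is recorded as a conjecture rather than a theorem.
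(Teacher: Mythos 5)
There is a genuine gap, and it is exactly the one the paper itself flags. The statement you are attacking is recorded in the paper as a conjecture, supported only by simulation data from the code in Appendix \ref{sec:Jcode}; the paper offers no proof, and it explains why your chosen route is expected to fail: the engine of the proof of Theorem \ref{thm:playertwowins} is a game state (namely $\{1^{(n-5)}\wedge 5\}$) that occurs at two depths of \emph{opposite parity}, and the paper remarks that in the Tribonacci game ``it seems like all congruent nodes on different layers still occur on turns with the same parity.'' Your entire plan hinges on locating such an opposite-parity recurring state (your ``diamond''), and you defer its existence to ``the search in the middle step.'' But if repeated states only ever recur at the same parity --- which is what the authors' experiments indicate --- then the strategy-transfer principle transfers a win for the player to move back to the \emph{same} player, producing no contradiction at all. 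So the crux is not a laborious but routine search, as your write-up suggests; it is a structural feature of the Tribonacci move set (three-term combines, the $\{(c+1)S_1\wedge cS_2\wedge\cdots\rightarrow S_{i+1}\}$ move, and splits that create low-index terms) that plausibly makes the needed parity flip impossible. A correct proof would either have to establish that an opposite-parity recurrence exists --- contradicting the observed pattern --- or find a genuinely different argument (e.g., a pairing/mirroring strategy, or a parity invariant on move counts of each type).

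To your credit, you candidly identify this as the unresolved step and note that it is ``precisely why the statement is recorded as a conjecture rather than a theorem.'' That self-assessment is accurate, but it also means what you have written is a research plan rather than a proof: every labeled-propagation step in your second and third paragraphs is conditional on an object whose existence is the whole difficulty, and which current evidence suggests does not exist. One smaller caution: before any such argument could run, the forced ``spine'' of the Tribonacci tree is much shorter than you imply --- already from $\{1^{n-2}\wedge 2\}$ the game branches (via the move $\{2S_1\wedge S_2 \rightarrow S_3\}$ and the move $\{2S_1\rightarrow S_2\}$), and the branching factor grows quickly, so controlling ancestors well enough for the propagation rules is itself nontrivial.
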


This is supported by simulation data taken by code in Appendix \ref{sec:Jcode}. Note there are extra difficulties in trying to prove this than occurred in the Fibonacci case. Recall in the proof of Theorem \ref{thm:playertwowins}, we used the fact that certain game states would be found on different layers with opposite parity. Trying to find similar switched parity nodes may be impossible; it seems like all congruent nodes on different layers still occur on turns with the same parity.

\section{Future Work}

There are many more ways that studies of this game can be extended. This paper covered the Generalized Zeckendorf game quite extensively, but improved upper bounds may still be found on the number of moves in any game. This work also showed the existence of a winning strategy for Player two for all $n>2$ in the Fibonacci case (and Player 1 if Player 2 is careless), but it does not show what either of these strategies are.

\begin{itemize}

\item The most natural question is to find a constructive proof that Player 2 has a winning strategy for the Fibonacci game (in other words, what is the winning strategy).\\ \

\item Finding lower bounds on the number of moves and examining who has the winning strategy and how to achieve it for Generalized Zeckendorf games is another natural question. Related to this, we can look at the distribution of the number of moves if the two players randomly move. Numerical investigations in \cite{BEFMcant} suggest that this quantity converges to a Gaussian distribution. Note Gaussian behavior has been seen in related problems in the distribution of the number of summands (see for example \cite{Bes, DG, Ko, MW1, MW2}).\\ \

\item Expanding in another direction, what if more players want to join? Who wins in that case, for either the generalized or regular Zeckendorf game? The analysis done here only shows there is a winning strategy that takes an even number of moves for all $n>2$ for the Fibonacci Zeckendorf game. It says nothing about the number of moves modulo $k$, where $k$ is odd and greater than 2.\\ \

\item What if the game had variable starting points: instead of all ones at the start, a random set of terms in the sequence. How long would the game take then, and does anyone have winning strategies more often?\\ \

\item Finally, can the analysis be performed for more general recurrences than the one in this paper?

\end{itemize}

\appendix


\section{Mathematica Code}\label{sec:Mcode}

Throughout this paper, we use results from code written in Mathematica, available at \begin{center}\bburl{github.com/paulbsmith1996/ZeckendorfGame/blob/master/ZeckGameMathematica.nb}. \end{center}The program contains code for simulating a random version of the Zeckendorf game, running a deterministic worst game algorithm of the Zeckendorf game, and simulating a random Tribonacci Zeckendorf game. There is also code included to tally up the number of moves in each of these simulations, which can be inputting into a graphing function.


\section{Java Code}\label{sec:Jcode}

The following is the ReadMe for the Java applet ``TreeDrawer'' by Paul Baird-Smith found at \bburl{https://github.com/paulbsmith1996/ZeckendorfGame}.

TreeDrawer is used to give a visual representation of the tree structure of the
Zeckendorf game. It plays through a specified game,
determining all moves that can be made, and draw all possible paths to the end of
this game.

Each horizontal layer is composed of GameStates that can be reached in the same
number of moves, namely the depth of the layer (e.g. any state in the 3rd layer
is reached in exactly 3 moves). States with red trim are states at which player 2
has a winning strategy over player 1, and states with blue trim are those at which
player 1 has a winning strategy. Lines between states signify that the lower state
can be reached after a single move from the upper state (parent/child relationship
in the tree structure).

States highlighted in yellow are terminal. There can be at most 1 of these in any
layer by design. Experiments to this point have shown that player 2 always has a
winning strategy (true up to 50), therefore we highlight states in green if they
belong to "the" winning path for player 2 (in reality, there are several winning
paths but we highlight just a single one).

The TreeDrawer can be executed, after compilation, by running the command
\begin{center}
appletviewer TreeDrawer.java
\end{center}
Do not delete the comment in the preamble, as this is used at runtime by the
appletviewer. Email paul.bairdsmith@gmail.com for more information.


\ \\

\end{document}